\newtheorem{theorem}{Theorem}
\newtheorem{example}[theorem]{Example}
\newtheorem{prop}[theorem]{Proposition}
\newcommand{\C}{\mathbf{C}}
\newcommand{\n}[1]{\left\vert#1\right\vert}
\begin{document}
\title[]{On the Equation $f^n(z)+g^n(z)=e^{\alpha z+\beta}$}

\author[]{Qi Han and Feng L\"{u}}

\address{Department of Mathematics, Texas A\&M University at San Antonio
\vskip 2pt San Antonio, Texas 78224, USA \hspace{21.6mm}{\sf Email: qhan@tamusa.edu}
\vskip 6pt College of Science, China University of Petroleum at Qingdao
\vskip 2pt Qingdao, Shandong 266580, P.R. China \hspace{11mm}{\sf lvfeng18@gmail.com}}

\thanks{{\sf 2010 Mathematics Subject Classification.} Primary 30D30, 34M05, 39B32; Secondary 30D20, 30D35, 39A10.}

\thanks{{\sf Keywords.} Fermat-type equation, meromorphic solution, Nevanlinna theory, Weierstrass's elliptic function.}

\begin{abstract}
We describe meromorphic solutions to the equations $f^n(z)+\left(f'\right)^n(z)=e^{\alpha z+\beta}$ and $f^n(z)+f^n(z+c)=e^{\alpha z+\beta}$ ($c\neq0$) over the complex plane $\C$ for integers $n\geq1$.
\end{abstract}

\maketitle





This paper is devoted to the description of meromorphic solutions to the following functional equation
\begin{equation}\label{Eq1}
f^n(z)+g^n(z)=e^{\alpha z+\beta},
\end{equation}
where $g(z)=f'(z)$ or $g(z)=f(z+c)$ for $\alpha,\beta,c(\neq0)\in\C$, when $n\geq1$.

In particular, when $\alpha=\beta=0$, then \eqref{Eq1} is reduced to the following well-known Fermat-type functional equation, initialed by Gross \cite{Gr1,Gr2,Gr3} and Baker \cite{Ba},
\begin{equation}\label{Eq1*}
f^n(z)+g^n(z)=1.
\end{equation}
Below, we summarize all possible solutions to \eqref{Eq1*} (see theorem 2.3 in Han \cite{Ha}).

\begin{prop}\label{Pr1}
For nonconstant meromorphic solutions $f$ and $g$ to the functional equation \eqref{Eq1*}, one has that
{\bf(A)} when $n=2$, the only solutions are $f=\frac{2\omega}{1+\omega^2}$ and $g=\frac{1-\omega^2}{1+\omega^2}$ for a nonconstant meromorphic function $\omega$;
{\bf(B)} when $n=3$, the only solutions are $f=\frac{1}{2\mathfrak{p}(h)}\left(1+\frac{\sqrt{3}}{3}\mathfrak{p}'(h)\right)$ and $g=\frac{\eta}{2\mathfrak{p}(h)}\left(1-\frac{\sqrt{3}}{3}\mathfrak{p}'(h)\right)$ for a nonconstant entire function $h$ and a cube root $\eta$ of unity, where $\mathfrak{p}$ denotes the Weierstrass $\mathfrak{p}$-function;
{\bf(C)} when $n\geq4$, there are no such solutions.
\end{prop}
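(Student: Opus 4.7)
The underlying strategy is algebraic-geometric: a solution $(f,g)$ to \eqref{Eq1*} is the same as a meromorphic map $\C\to\{X^n+Y^n=1\}$, and the classification of such maps is governed by the genus $(n-1)(n-2)/2$ of the target Fermat curve.

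For \textbf{(A)}, factor $1=f^2+g^2=(g+if)(g-if)$ and set $\phi:=g+if$; this is a meromorphic function with $\phi^{-1}=g-if$, hence nowhere zero. Solving yields
\begin{equation*}
g=\tfrac{1}{2}\left(\phi+\phi^{-1}\right),\qquad f=\tfrac{1}{2i}\left(\phi-\phi^{-1}\right).
\end{equation*}
The Möbius substitution $\phi=(1+i\omega)/(1-i\omega)$, equivalent to $\omega=-i(\phi-1)/(\phi+1)$, then recasts these into the displayed rational expressions in the nonconstant meromorphic function $\omega$.

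For \textbf{(B)}, the Fermat cubic $X^3+Y^3=1$ is an elliptic curve. I would first verify the forward direction by direct computation: using the identity $(a+b)^3+(a-b)^3=2a^3+6ab^2$, one finds $f^3+g^3=(1+(\mathfrak{p}')^2)/(4\mathfrak{p}^3)$, which equals $1$ precisely when $(\mathfrak{p}')^2=4\mathfrak{p}^3-1$, forcing the invariants $g_2=0,\,g_3=1$. For the converse, invoke the classical theorem that any nonconstant meromorphic map $\C\to E$ into an elliptic curve factors through the universal cover $\C\to E$, and hence through $(\mathfrak{p},\mathfrak{p}')$ composed with an entire function $h$. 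Writing down the explicit birational map between $\{X^3+Y^3=1\}$ and the Weierstrass normal form $\{Y^2=4X^3-1\}$ then produces the displayed rational expressions in $\mathfrak{p}(h),\mathfrak{p}'(h)$, with the factor $\eta$ absorbing the cube-root symmetry of the Fermat cubic.

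For \textbf{(C)}, the Fermat curve of degree $n\geq4$ has genus at least $3$. A nonconstant meromorphic solution would furnish a nonconstant holomorphic map $\C\to C_n$ into this hyperbolic compact Riemann surface, which is ruled out by the standard Picard/Liouville-type theorem forbidding such maps to surfaces of genus at least $2$. The principal obstacle lies in \textbf{(B)}: existence of \emph{some} Weierstrass parametrization is essentially formal since the target has genus one, but extracting the precise form displayed demands explicit construction of the birational map between the Fermat cubic and the Weierstrass normal form, together with careful bookkeeping of constants. Parts (A) and (C) become routine once the correct algebraic identity or geometric principle is in hand.
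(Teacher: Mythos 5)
The paper does not actually prove Proposition \ref{Pr1}: it is imported from the literature (Gross \cite{Gr1,Gr2,Gr3}, Baker \cite{Ba}, and theorem 2.3 of Han \cite{Ha}), so your proposal can only be measured against the classical argument in those sources --- and that is exactly the route you take: rational parametrization of the conic for $n=2$, elliptic uniformization for $n=3$, and hyperbolicity of the higher-genus Fermat curves for $n\geq4$. The outline is sound, and your forward computation in (B) correctly reduces $f^3+g^3=1$ to \eqref{Eq2}. Two remarks. First, in (A) the inference ``$\phi^{-1}=g-if$ is meromorphic, hence $\phi$ is nowhere zero'' is a non sequitur, and the conclusion is in fact false for genuinely meromorphic solutions: $\phi=(1+i\omega)/(1-i\omega)$ vanishes wherever $\omega=i$, i.e.\ at the poles of $f$ and $g$. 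Nothing downstream uses the claim, since the M\"{o}bius substitution only requires $\phi$ to be meromorphic and not identically $0$, but you should drop it; the zero-free property is what holds in the \emph{entire} case and is what yields $\phi=e^{ih}$, $f=\sin h$, $g=\cos h$, as the paper notes after the proposition. Second, the ``careful bookkeeping'' you defer in (B) is short and worth writing out: for any solution put $F=\frac{1}{f+g}$ and $G=\sqrt{3}\,\frac{f-g}{f+g}$ (note $f+g\not\equiv0$ since $f^3+g^3=1$); the identity $3(f-g)^2(f+g)+(f+g)^3=4(f^3+g^3)$ gives $G^2=4F^3-1$, so the uniformization theorem you quote yields $F=\mathfrak{p}(h)$ and $G=\mathfrak{p}'(h)$ for a nonconstant entire $h$, and solving this linear system for $f$ and $g$ returns exactly the displayed formulas, with the general $\eta$ accounted for by the symmetry $g\mapsto\eta g$ of \eqref{Eq1*}. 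With those two repairs the proof is complete and coincides with the cited one.
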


Via $\omega=\tan\left(\frac{h}{2}\right)$, we have $f=\frac{2\omega}{1+\omega^2}=\sin(h)$ and $g=\frac{1-\omega^2}{1+\omega^2}=\cos(h)$ in case {\bf(A)} for $n=2$ are the only entire solutions to the functional equation \eqref{Eq1*} for an entire function $h$.
Moreover, $\mathfrak{p}(z)$, the Weierstrass elliptic $\mathfrak{p}$-function with periods $\omega_1$ and $\omega_2$, is defined to be
\begin{equation}
\mathfrak{p}(z;\omega_1,\omega_2):=\frac{1}{z^2}+\sum_{\mu,\nu\in\mathbf{Z};\hspace{0.2mm}\mu^2+\nu^2\neq0}
\left\{\frac{1}{\left(z+\mu\omega_1+\nu\omega_2\right)^2}-\frac{1}{\left(\mu\omega_1+\nu\omega_2\right)^2}\right\},\nonumber
\end{equation}
which is an even function and satisfies, after appropriately choosing $\omega_1$ and $\omega_2$,
\begin{equation}\label{Eq2}
(\mathfrak{p}')^2=4\mathfrak{p}^3-1.
\end{equation}

For meromorphic solutions of partial differential equations similar to \eqref{Eq1}, we refer the reader to Li \cite{Li1,Li2}, Chang and Li \cite{CL}, Han \cite{Ha}, and the references therein.

Below, we assume the familiarity with the basics of Nevanlinna theory \cite{Ne} of meromorphic functions in $\C$ such as the first and second main theorems, and the standard notations such as the characteristic function $T(r,f)$, the proximity function $m(r,f)$, and the counting functions $N(r,f)$ (counting multiplicity) and $\bar{N}(r,f)$ (ignoring multiplicity).
$S(r,f)$ denotes a quantity satisfying $S(r,f)=o\left(T(r,f)\right)$ as $r\to\infty$, except possibly on a set of finite logarithmic measure which is not necessarily the same at each occurrence.

First, consider meromorphic solutions to $f^n+\left(f'\right)^n=\gamma^n$ when $n\geq4$ and $\gamma\not\equiv0$.
According to the conclusions of proposition \ref{Pr1}, both $\frac{f}{\gamma}$ and $\frac{f'}{\gamma}$ are constant.
Assume $f=c_1\gamma$ and $f'=c_2\gamma$ to see $c_1\gamma'=c_2\gamma$ with $c^n_1+c^n_2=1$.
If $c_1=0$, $f\equiv0$ and thus $\gamma\equiv0$.
So, $c_1\neq0$, and if $c_2=0$, $f$ is a constant and so is $\gamma$.
When $c_1c_2\neq0$, then $\gamma$ cannot have zeros and poles, and one sees $\gamma^n(z)=e^{\alpha z+\beta}$ with $\alpha=n\frac{c_2}{c_1}$.
This is another reason why we focus on $e^{\alpha z+\beta}$.

Next, for $f^3+\left(f'\right)^3=e^{\alpha z+\beta}$, $f$ must be entire and thus both $\frac{f}{\gamma}$ and $\frac{f'}{\gamma}$ are constant, so that the same conclusion holds as above.
Now, for $f^2+\left(f'\right)^2=e^{\alpha z+\beta}$, $f$ must again be entire and $f(z)=e^{\frac{\alpha z+\beta}{2}}\sin(h(z))$ and $f'(z)=e^{\frac{\alpha z+\beta}{2}}\cos(h(z))$ by proposition \ref{Pr1}, so that $\frac{\alpha}{2}\tan(h)\equiv1-h'$.
As $T(r,h')=O\left(T(r,h)\right)+S(r,h)$ and $\lim\limits_{r\to\infty}\frac{T(r,\tan(h))}{T(r,h)}=+\infty$ (see Clunie \cite[theorem 2 (i)]{Cl} that extended P\'{o}lya \cite{Po}), we see that either $\alpha=0$ and $h'=1$, or $h$ is a constant.

Summarizing the preceding discussions leads to the following result.

\begin{theorem}\label{Th2}
Solutions $f$ to the following differential equation
\begin{equation}\label{Eq3}
f^n(z)+\left(f'\right)^n(z)=e^{\alpha z+\beta}
\end{equation}
must be entire and are such that
{\bf(A)} when $n=1$, the general solutions are $f(z)=\frac{e^{\alpha z+\beta}}{\alpha+1}+ae^{-z}$ for $\alpha\neq-1$ and $f(z)=ze^{-z+\beta}+ae^{-z}$;
{\bf(B)} when $n=2$, either $\alpha=0$ and the general solutions are $f(z)=e^{\frac{\beta}{2}}\sin(z+b)$, or $f(z)=de^{\frac{\alpha z+\beta}{2}}$;
{\bf(C)} finally when $n\geq3$, then the general solutions are $f(z)=de^{\frac{\alpha z+\beta}{n}}$.
Here, $\alpha,\beta,a,b,d\in\C$ with $d^n\left(1+\left(\frac{\alpha}{n}\right)^n\right)=1$ for $n\geq1$.
\end{theorem}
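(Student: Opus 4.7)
The plan is to treat the three cases $n=1$, $n=2$, and $n\geq 3$ separately, leaning on the analysis already set up before the theorem. I would begin by justifying entirety uniformly: if $f$ has a pole of order $p\geq 1$ at some $z_{0}$, then $f^{n}$ and $(f')^{n}$ have poles at $z_{0}$ of distinct orders $np$ and $n(p+1)$, which cannot cancel and so contradict the entirety of $e^{\alpha z+\beta}$ (for $n=1$ the same order argument applies to the rewrite $f'=e^{\alpha z+\beta}-f$).

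For case \textbf{(A)}, $n=1$, I would solve the linear first-order ODE $f+f'=e^{\alpha z+\beta}$ by the integrating factor $e^{z}$: the identity $(e^{z}f)'=e^{(\alpha+1)z+\beta}$ integrates to $f=\frac{e^{\alpha z+\beta}}{\alpha+1}+ae^{-z}$ when $\alpha\neq -1$ and to $f=ze^{-z+\beta}+ae^{-z}$ when $\alpha=-1$.

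For case \textbf{(B)}, $n=2$, I would divide \eqref{Eq3} through by $\gamma^{2}$ with $\gamma:=e^{(\alpha z+\beta)/2}$ to obtain $(f/\gamma)^{2}+(f'/\gamma)^{2}=1$, and invoke proposition \ref{Pr1}(A) together with the subsequent remark that its entire solutions form a sine-cosine pair to pick an entire $h$ with $f=\gamma\sin h$ and $f'=\gamma\cos h$. Differentiating the first representation and comparing with the second yields the identity $\frac{\alpha}{2}\tan h=1-h'$ already recorded above the theorem, and the cited Clunie/P\'olya growth estimate then forces either $h$ to be a constant (so that $f=de^{(\alpha z+\beta)/2}$) or $\alpha=0$ with $h\equiv z+b$ (so that $f=e^{\beta/2}\sin(z+b)$).

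For case \textbf{(C)}, $n\geq 3$, the pre-theorem argument applied to $\gamma:=e^{(\alpha z+\beta)/n}$ already forces both $f/\gamma$ and $f'/\gamma$ to be constants, giving $f=de^{(\alpha z+\beta)/n}$. In cases \textbf{(B)} and \textbf{(C)}, substituting any candidate $f=de^{(\alpha z+\beta)/n}$ back into \eqref{Eq3} pins down the constraint $d^{n}\bigl(1+(\alpha/n)^{n}\bigr)=1$ by comparing coefficients of $e^{\alpha z+\beta}$. The main technical obstacle is the Nevanlinna step in case \textbf{(B)}: the comparison $\lim_{r\to\infty}T(r,\tan h)/T(r,h)=+\infty$ from Clunie's theorem is what forces the relation $\frac{\alpha}{2}\tan h=1-h'$ to collapse, since its right-hand side has characteristic only $O(T(r,h))+S(r,h)$.
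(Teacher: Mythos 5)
Your proposal is correct and follows essentially the same route as the paper: reduce to the Fermat-type equation of Proposition \ref{Pr1} after dividing by $e^{(\alpha z+\beta)/n}$, handle $n=2$ via the sine--cosine parametrization and the Clunie/P\'{o}lya growth comparison, and handle $n\geq3$ via constancy of $f/\gamma$ and $f'/\gamma$. The only additions you make --- the explicit pole-order argument for entirety and the integrating-factor computation for $n=1$ --- are details the paper leaves implicit.
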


Note when $n\geq2$, equation \eqref{Eq3} may have no solution for $\alpha=ne^{\frac{\left(2k+1\right)\pi i}{n}}$, $k=0,1,\ldots,n-1$.
In addition, we refer the reader to Li \cite{Li3} for some related interesting results.

Now, consider meromorphic solutions $f(z)$ to the following difference equation, with $c\neq0$,
\begin{equation}\label{Eq4}
f^n(z)+f^n(z+c)=e^{\alpha z+\beta}.
\end{equation}
When $n\geq1$, take $f(z)=c_1e^{\frac{\alpha z+\beta}{n}}$ and $f(z+c)=c_2e^{\frac{\alpha z+\beta}{n}}$ to see $c_1e^{\frac{\alpha c}{n}}=c_2$ with $c^n_1+c^n_2=1$, inspired by case {\bf(C)} of proposition \ref{Pr1}.
Note that $c_1c_2\neq0$ and $f(z+c)=e^{\frac{\alpha c}{n}}f(z)$.
As a result, all those \textsl{trivial} solutions are $f(z)=de^{\frac{\alpha z+\beta}{n}}$ with $d^n(1+e^{\alpha c})=1$ for $n\geq1$.

Next, we discuss the existence of nontrivial solutions to \eqref{Eq4} when $n=3${\footnote{Please be reminded that a similar yet simpler approach has been applied for the discussion of meromorphic solutions to $f^3(z)+f^3(z+c)=1$ in Han and L\"{u} \cite{HL}.}}.

\begin{theorem}\label{Th3}
There is no solution of finite order to the following difference equation
\begin{equation}\label{Eq4*}
f^3(z)+f^3(z+c)=e^{\alpha z+\beta}.
\end{equation}
Here, the \textsl{order} of $f$ is defined to be $\rho(f):=\limsup\limits_{r\to+\infty}\frac{\log T(r,f)}{\log r}$.
\end{theorem}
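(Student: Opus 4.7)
The plan is to reduce \eqref{Eq4*} to the pure Fermat-type identity $F^3+G^3=1$, invoke the Weierstrass parametrization of Proposition~\ref{Pr1}(B), and then extract a contradiction via a pole analysis. Suppose $f$ is a finite-order meromorphic solution to \eqref{Eq4*}, and set $F(z):=f(z)e^{-(\alpha z+\beta)/3}$ and $\lambda:=e^{\alpha c/3}$; a direct substitution converts \eqref{Eq4*} into $F^3(z)+\lambda^3F^3(z+c)=1$, and defining $G(z):=\lambda F(z+c)$ yields $F^3+G^3=1$. Both $F$ and $G$ remain meromorphic of finite order (shifts and exponential twists preserve the order), so Proposition~\ref{Pr1}(B) supplies a nonconstant entire function $h$ and a cube root of unity $\eta$ with $F=\tfrac{1}{2\mathfrak{p}(h)}\bigl(1+\tfrac{\sqrt{3}}{3}\mathfrak{p}'(h)\bigr)$ and $G=\tfrac{\eta}{2\mathfrak{p}(h)}\bigl(1-\tfrac{\sqrt{3}}{3}\mathfrak{p}'(h)\bigr)$.

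Next I would show $h$ is a polynomial: if $h$ were transcendental entire, standard Nevanlinna estimates on the composition with the doubly-periodic $\mathfrak{p}$ would force $T(r,F)$ to outgrow any power of $r$ (by summing preimage counts over the infinitely many lattice points $\Lambda$), contradicting the finite order of $F$. So $h$ is a polynomial of some degree $d\geq 1$. Writing $F(z+c)$ two ways---directly via Proposition~\ref{Pr1}(B) at the shifted argument, and via $G(z)/\lambda$---one obtains the key identity
\begin{equation*}
\lambda\,\mathfrak{p}(h(z))\Bigl(1+\tfrac{\sqrt{3}}{3}\mathfrak{p}'(h(z+c))\Bigr)=\eta\,\mathfrak{p}(h(z+c))\Bigl(1-\tfrac{\sqrt{3}}{3}\mathfrak{p}'(h(z))\Bigr).\qquad(\dagger)
\end{equation*}

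To extract the contradiction, fix a point $z_0$ with $h(z_0)\in\Lambda$, $h'(z_0)\neq 0$, and $h(z_0+c)\notin\Lambda$; such $z_0$ are infinite in number, since $h^{-1}(\Lambda)$ is infinite while the critical set of $h$ together with the simultaneous-hitting set $h^{-1}(\Lambda)\cap(h^{-1}(\Lambda)-c)$ are (generically) finite. The right-hand side of $(\dagger)$ has a triple pole at $z_0$ from $\mathfrak{p}'(h(z))$, while the left-hand side has only a double pole from $\mathfrak{p}(h(z))$; matching orders forces the triple-pole coefficient on the right to vanish, so $\mathfrak{p}(h(z_0+c))=0$ and hence $\mathfrak{p}'(h(z_0+c))=\epsilon i$ for some $\epsilon\in\{\pm 1\}$. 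Laurent-expanding $(\dagger)$ in $(z-z_0)$---exploiting that $\mathfrak{p}''=6\mathfrak{p}^2$ and $\mathfrak{p}'''=12\mathfrak{p}\mathfrak{p}'$ both vanish at zeros $s$ of $\mathfrak{p}$, while $\mathfrak{p}^{(4)}(s)=12(\mathfrak{p}'(s))^2=-12$---and equating the $(z-z_0)^{-2}$ coefficients produces
\begin{equation*}
\eta\,\frac{h'(z_0+c)}{h'(z_0)}=\frac{\lambda\bigl(1-\sqrt{3}\,\epsilon i\bigr)}{2}.
\end{equation*}
If $d\geq 2$, the function $R(z):=h'(z+c)/h'(z)$ is a nonconstant rational function of degree at most $d-1$, so it attains each of the two admissible values on the right at only finitely many $z_0$, contradicting the infinitude of admissible $z_0$. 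If $d=1$, then $R\equiv 1$; matching the subsequent $(z-z_0)^1$ coefficient then collapses the identity to $\epsilon i=-\sqrt{3}$, which is impossible.

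The main obstacle is the Laurent-series bookkeeping at the contradiction step: the vanishing of $\mathfrak{p}''$ and $\mathfrak{p}'''$ at the zeros of $\mathfrak{p}$ is essential, and one must organize the expansion carefully enough to extract the contradictory coefficient. A subsidiary technicality is to dispose of the degenerate configurations---critical points of $h$, or simultaneous lattice-hits---where a parallel analysis is required; for instance, the special sub-case $d=1$ with $ac\in\Lambda$ reduces $(\dagger)$ directly to $\eta\bigl(1-\tfrac{\sqrt{3}}{3}\mathfrak{p}'(h)\bigr)=\lambda\bigl(1+\tfrac{\sqrt{3}}{3}\mathfrak{p}'(h)\bigr)$, which after matching coefficients of $1$ and $\mathfrak{p}'(h)$ forces $\eta=\lambda$ and $\eta=-\lambda$, hence $\eta=0$, contrary to $\eta^3=1$.
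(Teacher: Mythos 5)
Your proposal is correct in its essentials but reaches the contradiction by a genuinely different mechanism than the paper. Both arguments share the same skeleton: reduce to $F^3+G^3=1$, invoke Proposition~\ref{Pr1}(B), show $h$ is a polynomial (the paper does this via \eqref{Eq7}--\eqref{Eq9} and Edrei--Fuchs/Bergweiler, which you should cite rather than re-derive), and extract the functional identity $(\dagger)$, which is exactly the paper's \eqref{Eq6} cross-multiplied. From there the paths diverge: the paper works at the \emph{zeros} of $\mathfrak{p}\circ h$, shows they must be carried to poles of $\mathfrak{p}(h(z+c))$, and then runs a global value-distribution argument --- Yamanoi's second main theorem with the three small functions $\eta^m e^{(\alpha z+\beta)/3}$ to get $m(r,f)=S$, the lemma of the logarithmic derivative, the fact that poles of $\mathfrak{p}$ are double so $\bar{N}\leq\frac{1}{2}N$, and Chiang--Feng --- ending in $T(r,\mathfrak{p}(h))\leq\frac{1}{2}T(r,\mathfrak{p}(h))+S$. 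You instead work at the \emph{poles} of $\mathfrak{p}\circ h$ and argue purely locally: the $t^{-3}$-coefficient forces $\mathfrak{p}(h(z_0+c))=0$, and the $t^{-2}$-coefficient yields $\eta\,h'(z_0+c)/h'(z_0)=\lambda(1-\sqrt{3}\epsilon i)/2$ at infinitely many points, which is incompatible with $h'(z+c)/h'(z)$ being a nonconstant rational function ($d\geq2$) or, via the $t^{1}$-coefficient, with $\epsilon i=\pm i$ ($d=1$). I have checked both coefficient computations and they are correct; your route buys a substantially more elementary proof that avoids Yamanoi's deep theorem and the difference-characteristic estimates entirely. The one place you defer real work is the degenerate configuration for $d\geq2$ in which all but finitely many $z_0\in h^{-1}(\Lambda)$ also satisfy $h(z_0+c)\in\Lambda$: this does close, and by the same method --- there both sides of $(\dagger)$ acquire poles of order $5$, and matching leading coefficients gives $\eta\,h'(z_0+c)=-\lambda\,h'(z_0)$ at infinitely many points, hence identically, forcing $h'$ to be $c$-periodic and thus constant, contradicting $d\geq2$ --- but you should write this out rather than wave at ``a parallel analysis.'' (Both you and the paper tacitly exclude the constant parametrization, i.e.\ the trivial finite-order solutions $f=de^{(\alpha z+\beta)/3}$ with $d^3(1+e^{\alpha c})=1$; the theorem is really about nonconstant $F$, $G$, and that caveat deserves a sentence.)
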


\begin{proof}
Via proposition \ref{Pr1}, one has
\begin{equation}\label{Eq5}
f(z)=\frac{1}{2}\frac{\left\{1+\frac{\sqrt{3}}{3}\mathfrak{p}'(h(z))\right\}}{\mathfrak{p}(h(z))}e^{\frac{\alpha z+\beta}{3}}\hspace{2mm}\mathrm{and}\hspace{2mm}
f(z+c)=\frac{\eta}{2}\frac{\left\{1-\frac{\sqrt{3}}{3}\mathfrak{p}'(h(z))\right\}}{\mathfrak{p}(h(z))}e^{\frac{\alpha z+\beta}{3}}.
\end{equation}
Thus, a routine computation leads to
\begin{equation}\label{Eq6}
\frac{\eta\left\{1-\frac{\sqrt{3}}{3}\mathfrak{p}'(h(z))\right\}}{\mathfrak{p}(h(z))}
=\frac{\left\{1+\frac{\sqrt{3}}{3}\mathfrak{p}'(h(z+c))\right\}}{\mathfrak{p}(h(z+c))}e^{\frac{\alpha c}{3}}.
\end{equation}

Assume $\rho(f)<\infty$.
Then, from \eqref{Eq2} and the first equality in \eqref{Eq5}, one has
\begin{equation}\label{Eq7}
\frac{3f^2(z)\mathfrak{p}^2(h(z))}{e^{\frac{2}{3}\left(\alpha z+\beta\right)}}-\frac{3f(z)\mathfrak{p}(h(z))}{e^{\frac{1}{3}\left(\alpha z+\beta\right)}}+1=\mathfrak{p}^3(h(z)).
\end{equation}

Notice the estimate (2.7) of Bank and Langley \cite{BL} says that
\begin{equation}\label{Eq8}
T(r,\mathfrak{p})=\frac{\pi}{A}\,r^2\left(1+o\left(1\right)\right)\hspace{2mm}\mathrm{and}\hspace{2mm}\rho(\mathfrak{p})=2.
\end{equation}
Here, $A$ is the area of the parallelogram $\mathfrak{S}$ with vertices $0,\omega_1,\omega_2,\omega_1+\omega_2$.

Recall $T(r,e^{\alpha z})=\frac{\n{\alpha}}{\pi}\,r\left(1+o\left(1\right)\right)$.
We combine \eqref{Eq7} and \eqref{Eq8} to observe
\begin{equation}\label{Eq9}
T(r,\mathfrak{p}(h))\leq2T(r,f)+\frac{2}{3}T(r,e^{\alpha z})+O\left(1\right),
\end{equation}
and hence $\rho(\mathfrak{p}(h))<\infty$ as well.
By corollary 1.2 of Edrei and Fuchs \cite{EF} (see also theorem 1 of Bergweiler \cite{Be} for a different and elegant proof), $h$ must be a polynomial.

A side note here is $T(r,\mathfrak{p}(h))=O\left(r^{2l}\right)$ for some positive integer $l\geq1$.

Notice when $\mathfrak{p}(z_0)=0$, then $(\mathfrak{p}')^2(z_0)=-1$ by \eqref{Eq2}.
Now, write all the zeros of $\mathfrak{p}$ by $\left\{z_j\right\}_{j=1}^\infty$ that satisfy $\n{z_j}\to\infty$ as $j\to\infty$, and assume that $h(a_{j,k})=z_j$ for $k=1,2,\ldots,\deg(h)$.
Then, we have $(\mathfrak{p}')^2(h(a_{j,k}))=(\mathfrak{p}')^2(z_j)=-1$.

Suppose there is a subsequence of $\left\{a_{j,k}\right\}_{j=1}^{\infty}$ with respect to $j$ such that $\mathfrak{p}(h(a_{j,k}+c))=0$.
Denote this subsequence still by $\left\{a_{j,k}\right\}_{j=1}^{\infty}$ and without loss of generality fix the index $k$ below.
So, $(\mathfrak{p}')^2(h(a_{j,k}+c))=-1$.
Differentiate \eqref{Eq6} and use substitution to derive
\begin{equation}
\begin{split}
&\,\eta\left\{1-\frac{\sqrt{3}}{3}\mathfrak{p}'(h(a_{j,k}))\right\}\mathfrak{p}'(h(a_{j,k}+c))h'(a_{j,k}+c)\\
=&\left\{1+\frac{\sqrt{3}}{3}\mathfrak{p}'(h(a_{j,k}+c))\right\}\mathfrak{p}'(h(a_{j,k}))h'(a_{j,k})\,e^{\frac{\alpha c}{3}},\nonumber
\end{split}
\end{equation}
from which we observe one and only one of the following situations appears
\begin{equation}
\left\{\begin{array}{ll}
\eta\left\{1-i\frac{\sqrt{3}}{3}\right\}h'(a_{j,k}+c)=\left\{1+i\frac{\sqrt{3}}{3}\right\}h'(a_{j,k})\,e^{\frac{\alpha c}{3}},\\ \\
\eta\,h'(a_{j,k}+c)=-\,h'(a_{j,k})\,e^{\frac{\alpha c}{3}},\\ \\
\eta\left\{1+i\frac{\sqrt{3}}{3}\right\}h'(a_{j,k}+c)=\left\{1-i\frac{\sqrt{3}}{3}\right\}h'(a_{j,k})\,e^{\frac{\alpha c}{3}}.
\end{array}\right.\nonumber
\end{equation}

As $h(z)$ and $h(z+c)$ are polynomials of the same leading coefficient, and there are infinitely many $a_{j,k}$'s with $\n{a_{j,k}}\to\infty$ when $j\to\infty$, we would have to conclude
\begin{equation}\label{Eq10}
\left\{\begin{array}{ll}
\eta\left\{1-i\frac{\sqrt{3}}{3}\right\}h'(z+c)=\left\{1+i\frac{\sqrt{3}}{3}\right\}h'(z)\,e^{\frac{\alpha c}{3}},\\ \\
\eta\,h'(z+c)=-\,h'(z)\,e^{\frac{\alpha c}{3}},\\ \\
\eta\left\{1+i\frac{\sqrt{3}}{3}\right\}h'(z+c)=\left\{1-i\frac{\sqrt{3}}{3}\right\}h'(z)\,e^{\frac{\alpha c}{3}}.
\end{array}\right.
\end{equation}
This is possible only if $\alpha,c$ satisfy $e^{\frac{\alpha c}{3}}=-1,\frac{1}{2}\pm i\frac{\sqrt{3}}{2}$ because $\eta=1,-\frac{1}{2}\pm i\frac{\sqrt{3}}{2}$.

When this is true, one has uniformly by \eqref{Eq10} that $h(z)=az+b$ for $ac\neq0$.
As $\mathfrak{p}(z)$ has two distinct zeros in $\mathfrak{S}$ and thus in each associated lattice, we observe that all the zeros $\left\{z_j\right\}_{j=1}^\infty$ of $\mathfrak{p}(z)$ are transferred to each other through (an integral multiple of) $ac$.
We may for simplicity consider two cases where either $ac=\omega_1,\omega_2,\omega_1+\omega_2$, or $ac\neq\omega_1,\omega_2,\omega_1+\omega_2$ and $ac\in\mathfrak{S}$.
The former cannot occur in view of \eqref{Eq6} using the periodicity of $\mathfrak{p}$ and $\mathfrak{p}'$, while the latter neither - noting $\mathfrak{p}(z)$ has a unique double pole in each lattice, we substitute $z_{\infty}=-\frac{b}{a}$ into \eqref{Eq6} to deduce a contradiction
\begin{equation}
\infty=\frac{\eta\left\{1-\frac{\sqrt{3}}{3}\mathfrak{p}'(0)\right\}}{\mathfrak{p}(0)}
=\frac{\left\{1+\frac{\sqrt{3}}{3}\mathfrak{p}'(ac)\right\}}{\mathfrak{p}(ac)}e^{\frac{\alpha c}{3}}<\infty.\nonumber
\end{equation}

Thus, $\mathfrak{p}(h(a_{j,k}+c))=0$ may only occur for finitely many $a_{j,k}$'s.
Without loss of generality, assume $\mathfrak{p}(h(a_{j,k}+c))\neq0$ for each $k=1,2,\ldots,\deg(h)$ and all $j>J$, with $J$ being a sufficiently large positive integer.
Since $\mathfrak{p}(h(a_{j,k}))=0$ and $(\mathfrak{p}')^2(h(a_{j,k}))=-1$, one has $\mathfrak{p}(h(a_{j,k}+c))=\infty$ when $j>J$ by \eqref{Eq6} again.
As a consequence, noticing $O\left(\log r\right)=S(r,\mathfrak{p}(h))$, we have
\begin{equation}\label{Eq11}
\begin{split}
&N\left(r,\frac{1}{\mathfrak{p}(h(z))}\right)\leq\bar{N}\left(r,\frac{1}{\mathfrak{p}(h(z))}\right)+2N\left(r,\frac{1}{h'(z)}\right)\\
\leq\,\,&\bar{N}(r,\mathfrak{p}(h(z+c)))+2T(r,h')+O\left(\log r\right)\leq\bar{N}(r,\mathfrak{p}(h(z+c)))+S(r,\mathfrak{p}(h)).
\end{split}
\end{equation}

Recall the first equality in \eqref{Eq5} and estimate \eqref{Eq8}.
One has
\begin{equation}\label{Eq12}
T(r,f)\leq T(r,\mathfrak{p}(h))+T(r,\mathfrak{p}'(h))+\frac{1}{3}T(r,e^{\alpha z})+O\left(1\right)\leq O\left(T(r,\mathfrak{p}(h))\right),
\end{equation}
so that $\rho(f)=\rho(\mathfrak{p}(h))$ and $S(r,f)=S(r,\mathfrak{p}(h))$ from \eqref{Eq9} and the side note after it.

Thus, $T(r,e^{\alpha z})=S(r,f)$.
As all the zeros of $f-e^{\frac{\alpha z+\beta}{3}}$, $f-\eta e^{\frac{\alpha z+\beta}{3}}$ and $f-\eta^2e^{\frac{\alpha z+\beta}{3}}$ ($\eta\neq1$) are of multiplicities at least $3$ from \eqref{Eq4*}, Yamanoi's second main theorem \cite{Ya} yields
\begin{equation}
\begin{aligned}
&2T(r,f)\leq\sum_{m=1}^3\bar{N}\left(r,\frac{1}{f-\eta^me^{\frac{\alpha z+\beta}{3}}}\right)+\bar{N}(r,f)+S(r,f)\\
\leq\,\,&\frac{1}{3}\sum_{m=1}^3N\left(r,\frac{1}{f-\eta^me^{\frac{\alpha z+\beta}{3}}}\right)+N(r,f)+S(r,f)\leq2T(r,f)+S(r,\mathfrak{p}(h)).\nonumber
\end{aligned}
\end{equation}
Therefore, one derives $T(r,f)=N(r,f)+S(r,\mathfrak{p}(h))$ so that $m(r,f)=S(r,\mathfrak{p}(h))$.

Finally, applying the lemma of logarithmic derivative, we have
\begin{equation}\label{Eq13}
m\left(r,\frac{1}{\mathfrak{p}(h)}\right)
\leq m(r,f)+m\left(r,\frac{\mathfrak{p}'(h)h'}{\mathfrak{p}(h)}\right)+S(r,\mathfrak{p}(h))=S(r,\mathfrak{p}(h))
\end{equation}
again through the first equality in \eqref{Eq5}.
Combining \eqref{Eq11} and \eqref{Eq13} leads to
\begin{equation}\label{Eq14}
\begin{aligned}
&T(r,\mathfrak{p}(h))+O\left(1\right)=T\left(r,\frac{1}{\mathfrak{p}(h)}\right)
=m\left(r,\frac{1}{\mathfrak{p}(h(z))}\right)+N\left(r,\frac{1}{\mathfrak{p}(h(z))}\right)\\
\leq\,\,&\bar{N}(r,\mathfrak{p}(h(z+c)))+S(r,\mathfrak{p}(h))\leq\frac{1}{2}N(r,\mathfrak{p}(h(z+c)))+S(r,\mathfrak{p}(h))\\
\leq\,\,&\frac{1}{2}T(r,\mathfrak{p}(h(z+c)))+S(r,\mathfrak{p}(h))\leq\frac{1}{2}T(r,\mathfrak{p}(h))+S(r,\mathfrak{p}(h)),
\end{aligned}
\end{equation}
where theorem 2.1 of Chiang and Feng \cite{CF} was applied.
This is a contradiction.
\end{proof}

\begin{example}\label{Ex4}
Assume $f(z)$ is given by \eqref{Eq5} through $h(z)=e^z$.
Then, $\rho(f)=\infty$, and for $c=\pi i$ and each $\alpha$ with $e^{\alpha c}=1$, $f^3(z)+f^3(z+c)=e^{\alpha z+\beta}$ for all $\beta\in\C$.
\end{example}

\begin{example}\label{Ex5}
Let $f_1(z)=e^{\frac{\alpha z+\beta}{2}}\sin(z)$ and $f_2(z)=e^{\frac{\alpha z+\beta}{2}}\sin(e^{4iz}+z)$.
Then, $\rho(f_1)\leq1$ and $\rho(f_2)=\infty$.
For $c=\frac{\pi}{2}$ and each $\alpha$ with $e^{\alpha c}=1$, $f^2_j(z)+f^2_j(z+c)=e^{\alpha z+\beta}$ for $j=1,2$ and all $\beta\in\C$.
\end{example}

\begin{example}\label{Ex6}
Define $f_1(z)=e^z+\frac{e^{\alpha z+\beta}}{2}$ and $f_2(z)=e^{e^{2z}+z}+\frac{e^{\alpha z+\beta}}{2}$.
Then, $\rho(f_1)\leq1$ and $\rho(f_2)=\infty$.
For $c=i\pi$ and each $\alpha$ with $e^{\alpha c}=1$, $f_j(z)+f_j(z+c)=e^{\alpha z+\beta}$ for $j=1,2$ and all $\beta\in\C$.
\end{example}

Opposite to theorem \ref{Th2}, even the existence of finite order or infinite order solutions to $f^2(z)+f^2(z+c)=e^{\alpha z+\beta}$ may be described for special $\alpha,c$, I was not able to characterize systematically all possible solutions to this difference equation.
The same concern occurs for the existence of infinite order solutions to $f^3(z)+f^3(z+c)=e^{\alpha z+\beta}$ in a systematic manner.

Finally, we briefly consider $f(z)+f(z+c)=e^{\alpha z+\beta}$.
Recall \eqref{Eq4} to choose $f(z)=de^{\alpha z+\beta}$ and $f(z+c)=e^{\alpha c}f(z)$.
When $d(1+e^{\alpha c})=1$, we are done.
I imagine the general solutions may be of the form $f(z)=\delta(z)+de^{\alpha z+\beta}$ for a meromorphic function $\delta(z)$ with $\delta(z+c)=-\delta(z)$ and $d(1+e^{\alpha c})=1$; in addition, the general solutions may be of the form $f(z)=\delta(z)-\frac{z}{c}e^{\alpha z+\beta}$ for $e^{\alpha c}=-1$.
When $n\geq2$, then equation \eqref{Eq4} may have no solution if $e^{\alpha c}=-1$.





\end{document}